\def\MR#1{}
\theoremstyle{definition}
\newtheorem{thm}{Theorem}
\newtheorem{prop}[thm]{Proposition}
\newtheorem{cor}[thm]{Corollary}
\newtheorem{lemma}[thm]{Lemma}
\newtheorem{conj}[thm]{Conjecture}
\theoremstyle{definition}
\newtheorem{ex}[thm]{Example}
\newtheorem{rmk}[thm]{Remark}
\newcommand{\supp}{{\mathrm{supp}}}
\newcommand{\sqp}[1][]{%
\ifthenelse{\isempty{#1}}{\mathcal{P}(\square_2)}{\mathcal{P}^{#1}(\square_2)}%
}
\newcommand{\Q}{{\mathbb{Q}}}
\newcommand{\Z}{{\mathbb{Z}}}
\newcommand{\rleft}{\mathopen{}\mathclose\bgroup\left}
\newcommand{\rright}{\aftergroup\egroup\right}
\DeclareMathOperator{\chr}{char}
\def\coloneqq{\mathrel{\mathop:}=}
\title{When are symmetric ideals monomial?}
\author[A.\,Kretschmer]{Andreas Kretschmer}
\address[A.\,Kretschmer]{Institut f\"ur Algebra und Geometrie\\Fakult\"at f\"ur Mathematik\\Otto-von-Guericke-Universit\"at Magdeburg}
\curraddr{}
\email{andreas.kretschmer@ovgu.de}
\thanks{}
\begin{document}
\maketitle{}
\setlength{\parindent}{0pt}
\nocite{*}

\begin{abstract}
	We study conditions on polynomials such that the ideal generated by their orbits under the symmetric group action becomes a monomial ideal or has a monomial radical. If the polynomials are homogeneous, we expect that such an ideal has a monomial radical if their coefficients are sufficiently general with respect to their supports. We prove this for instance in the case where some generator contains a power of a variable. Moreover, if the polynomials have only square-free terms and their coefficients do not sum to zero, then in a larger polynomial ring the ideal itself is square-free monomial. This has implications also for symmetric ideals of the infinite polynomial ring.
\end{abstract}

\section{Introduction and results}

Over the last years there has been considerable interest in symmetric closed subschemes of both finite and infinite-dimensional affine spaces defined by ideals of polynomial rings which are invariant under the action of the symmetric group, see e.g. \cite{Draisma2014Noetherianity} for the infinite case. A key structural result in the infinite case is that these ideals are generated by the orbits of finitely many polynomials \cite{Cohen1967Laws,Aschenbrenner2007Finite,Hillar2012Finite}. In~\cite{Nagel2017Equivariant}, Hilbert functions for invariant chains of ideals are introduced and shown to be rational functions. The Hilbert functions of symmetric \emph{monomial} ideals in $K[x_1, x_2, \ldots]$ were investigated in~\cite{Nagel2018Equivariant}. The Betti numbers of symmetric monomial ideals in a polynomial ring with finitely many variables are studied in~\cite{Murai2020Equivariant}, where at the end of the introduction the authors ask for an extension of their results to the case of generators which are not necessarily monomials.

This short note asks how much of a restriction the monomial case actually is, and it is meant to bring more awareness to the fact that solution sets to symmetric systems of polynomials are often very simple even if the individual polynomials are not. Theorem~\ref{thm:main} and Conjecture~\ref{conjecture:supp_length_k} are attempts at making this intuition precise in the finite setting, and Theorem~\ref{thm:squarefree_terms} is an indication for how the infinite case might actually yield simpler and more explicit results. For a focus on Specht polynomials instead of monomials, see the recent~\cite{Moustrou21Symmetric}.

\subsection*{Notation}
Let $K$ be a field. The set of occurring monomials in a polynomial $f \in K[x_1, \ldots, x_n]$ is called its \emph{support} and denoted $\supp(f)$. We identify a monomial in $K[x_1, \ldots, x_n]$ with its exponent vector in $(\Z_{\geq 0})^n$ and call any non-empty, finite subset $\mathcal{A} \subseteq (\Z_{\geq 0})^n$ a \emph{support set}. The support set $\mathcal{A}$ is called \emph{homogeneous of degree $d$} if $\sum_{i=1}^n a_i = d$ for all $a \in \mathcal{A}$. We identify $K^\mathcal{A}$ with the set of all polynomials $f \in K[x_1, \ldots, x_n]$ with $\supp(f) \subseteq \mathcal{A}$.
We write $S_n$ for the symmetric group and $S_\infty$ for the (small) infinite symmetric group, acting on polynomial rings by permuting the variables. We denote the action of $\sigma \in S_n$ on a polynomial $f$ by $\sigma.f$, and it is induced by letting $\sigma.x_i \coloneqq x_{\sigma(i)}$. For the entire orbit of $f$ under the action of a subgroup $G \subseteq S_n$ we write $G.f$.
Listing the exponents of a monomial in decreasing order gives a partition of the degree of the monomial, and we call this partition the \emph{type} of the monomial. Two monomials have the same type if and only if they are permutations of one another.
Whenever we use the term \emph{general}, the field $K$ is assumed to be infinite. Fixing a support set $\mathcal{A}$, an assertion about polynomials $f \in K[x_1, \ldots, x_n]$ with $\supp(f) \subseteq \mathcal{A}$ holds for \emph{general coefficients of $f$ with respect to $\mathcal{A}$} if the subset of $K^\mathcal{A}$ for which the assertion holds contains a non-empty Zariski-open subset.
Given a set of polynomials $S$, by $\mathcal{V}(S)$ we denote its vanishing set.

\begin{thm}\label{thm:main}
	Let $\mathcal{A} \subseteq (\Z_{\geq 0})^n$ be a homogeneous support set and $f \in K^\mathcal{A}$. Denote by $k$ the minimal number of strictly positive entries among all elements of $\mathcal{A}$ and let $G \subseteq S_n$ be a subgroup.
	\begin{enumerate}
		\item\label{item:homogeneous} Assume $\mathcal{A}$ contains a power of some variable and let $G$ act transitively on the variables. Then, for general coefficients of~$f$ with respect to $\mathcal{A}$, $\sqrt{(G.f)} = (x_1, \ldots, x_n)$ is the irrelevant ideal, in particular $\mathcal{V}(G.f) = \{0\}$.
		\item\label{item:one_type_monomial} Let all monomials in $\mathcal{A}$ be of the same type and let $G$ act transitively on the set of all monomials of this type. Then, for general coefficients of~$f$ with respect to $\mathcal{A}$, $(G.f)$ is monomial, generated by the orbit of any term of~$f$.
		\item\label{item:symmetric_supp} Assume $\chr(K) = 0$ and $n \geq 5$. Let $\mathcal{A}$ be symmetric, i.e., every permutation of an element of $\mathcal{A}$ lies in $\mathcal{A}$ as well. Then, for general coefficients of~$f$ with respect to $\mathcal{A}$, $\sqrt{(S_n.f)} = (S_n. x_1 x_2 \cdots x_k)$. In particular, $\mathcal{V}(S_n.f)$ consists of all elements of $K^n$ with at least $n-k+1$ zero entries.
	\end{enumerate}
\end{thm}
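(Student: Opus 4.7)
\smallskip\noindent\textbf{Proof plan.}
The overall strategy in all three parts is to verify the claim directly at a carefully chosen monomial $f_0 \in K^\mathcal{A}$ and then extend to general $f$ via a Zariski-openness argument. For~(i), take $f_0 \coloneqq x_i^d$, which lies in $K^\mathcal{A}$ by hypothesis. Transitivity of $G$ gives $G.f_0 = \{x_1^d, \ldots, x_n^d\}$, so $\mathcal{V}(G.f_0) = \{0\}$. By upper semicontinuity of fiber dimension for the incidence variety $\{(f,p) \in K^\mathcal{A} \times K^n : \sigma.f(p) = 0 \text{ for all } \sigma \in G\}$, the set $\{f : \dim \mathcal{V}(G.f) \leq 0\}$ is Zariski-open and contains $f_0$. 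Since $\mathcal{V}(G.f)$ is a cone containing the origin and is zero-dimensional for $f$ in this open set, it must equal $\{0\}$; hence $\sqrt{(G.f)} = (x_1, \ldots, x_n)$.

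For~(ii), let $V \subseteq K[x_1, \ldots, x_n]$ denote the $K$-span of all monomials of the common type~$\lambda$. One always has $(G.f) \subseteq (V)$, while $(V) = (G.m)$ for any single term $m$ of $f$ by transitivity of $G$ on type-$\lambda$ monomials. The reverse inclusion reduces to $\lspan_K(G.f) = V$: a maximal-rank condition on the matrix with rows indexed by $G$ and columns by monomials of type~$\lambda$, hence Zariski-open on $K^\mathcal{A}$. For $f_0 = m$ with $m$ any single monomial of $\mathcal{A}$, the condition holds trivially, since then $G.f_0$ is already the full set of type-$\lambda$ monomials.

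For~(iii), the inclusion $\sqrt{(S_n.f)} \subseteq (S_n. x_1 \cdots x_k)$ follows because every monomial of $\sigma.f$ has support of size at least $k$, so vanishes on any point with at most $k-1$ nonzero coordinates. For the reverse, $S_n$-symmetry of $\mathcal{A}$ together with the definition of $k$ produces a vector $(\mu_1, \ldots, \mu_k, 0, \ldots, 0) \in \mathcal{A}$ with all $\mu_i > 0$; set $f_0 \coloneqq x_1^{\mu_1} \cdots x_k^{\mu_k}$, so $(S_n.f_0)$ is a monomial ideal with radical $(S_n. x_1 \cdots x_k)$. To pass from $f_0$ to general $f$, observe that by $S_n$-symmetry the desired $\mathcal{V}(S_n.f) \subseteq \{p : |\supp(p)| \leq k-1\}$ is equivalent to the family of statements $\mathcal{V}(S_m.f'_m) \cap (K^*)^m = \emptyset$ for each $m \in \{k, \ldots, n\}$, where $f'_m \coloneqq f|_{x_{m+1} = \cdots = x_n = 0}$; each holds at $f = f_0$, since $S_m.(f_0)'_m$ consists of monomials not vanishing on $(K^*)^m$.

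The main obstacle is to show that the family in the previous paragraph defines a non-empty Zariski-open subset of $K^\mathcal{A}$. I would argue this by a codimension count on the bad locus in $K^\mathcal{A} \times K^n$, stratifying the second factor by partition type $\lambda$ of the nonzero coordinates of~$p$. For $p$ of type $\lambda$ with $\ell = \ell(\lambda)$ distinct nonzero values, the conditions $\tau.f(p) = 0$ for $\tau \in S_n$ cut out a subspace of $K^\mathcal{A}$ of codimension (generically) $|S_n \cdot p|$, while the parameter space of such $p$ modulo the $K^*$-scaling preserved by homogeneity has dimension $\ell - 1$; the projected bad locus in $K^\mathcal{A}$ therefore has codimension at least $|S_n \cdot p| - \ell + 1$. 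The hypotheses $n \geq 5$ and $\chr(K) = 0$ enter precisely here, the former to make this bound strictly positive simultaneously for every partition type (especially the small non-generic types) and the latter to avoid degeneracies in the associated rank computation.
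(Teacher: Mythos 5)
Your arguments for parts (i) and (ii) follow essentially the same route as the paper. For (i), you need to be a bit more careful than ``upper semicontinuity of fiber dimension'': that theorem bounds the set of \emph{source} points with large fibers, so to conclude openness in the \emph{target} $K^\mathcal{A}$ you must push a closed set forward, which requires properness. Your observation that $\mathcal{V}(G.f)$ is a cone is exactly what makes this work, since it lets you projectivize the second factor and use that $\mathbb{P}^{n-1}$ is complete; the paper does precisely this. The paper also adds a careful descent from $\overline{K}$ to $K$ (via Chevalley's theorem and faithful flatness), which your sketch omits but which is needed because the assertion about $\sqrt{(G.f)}$ is made in $K[x_1,\ldots,x_n]$, not over the algebraic closure, while the ``nonempty open'' argument naturally lives over $\overline{K}$. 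For (ii), your maximal-rank reformulation is the same as the paper's matrix $C$ argument.

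For part (iii), however, there is a genuine gap in your codimension count. You assert that for a point $p$ of fixed type the linear conditions $\tau.f(p)=0$, $\tau\in S_n$, cut out a subspace of codimension ``generically $|S_n\cdot p|$.'' This is not correct: the codimension equals the rank of the matrix whose rows are the $S_n$-permutations of the evaluation vector $(m(p))_{m\in\mathcal{A}}\in K^{\mathcal{A}}$, i.e.\ the dimension of the $S_n$-subrepresentation generated by this vector inside $K^{\mathcal{A}}$. That rank is bounded above by $|\mathcal{A}|$ regardless of $|S_n\cdot p|$, and for special $p$ (e.g.\ points with many equal nonzero coordinates, whose evaluation vectors are nearly $S_n$-invariant) it collapses drastically; these are exactly the dangerous strata. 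The paper's proof does not attempt a uniform bound but instead carries out a three-way case analysis on the subrepresentation of the permutation module $M^{m_0}$ generated by the evaluation vector, using the multiplicities of the trivial and sign representations (Kostka numbers) and the fact that every other irreducible $S_n$-representation has dimension $\geq n-1$ when $n\geq 5$. That representation-theoretic input is where $n\geq 5$ and $\chr K=0$ are actually used, and your sketch has no substitute for it. Separately, the first case in the paper also needs the nontrivial observation that the points with almost-invariant evaluation vector lie on a torsion subvariety $x_1^e=\cdots=x_n^e$ and are hence finite in number; this too is absent from your sketch. So (iii) as proposed is not a proof, and I do not see how to complete it along your lines without essentially reconstructing the paper's argument.
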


Theorem~\ref{thm:main} shows that the surprising behavior of \cite[Example~2.6]{Juhnke2020Asymptotic} is not rare. We even expect the following to hold true.

\begin{conj}\label{conjecture:supp_length_k}
	Let $\mathcal{A} \subseteq (\Z_{\geq 0})^n$ be a support set and $f \in K^\mathcal{A}$. Denote by $k$ the minimal number of strictly positive entries among all elements of $\mathcal{A}$. Then, for general coefficients of~$f$ with respect to~$\mathcal{A}$, $\sqrt{(S_n.f)} = (S_n. x_1 x_2 \cdots x_k)$ if $\mathcal{A}$ is homogeneous, and
	\begin{equation*}
		\mathcal{V}(S_n.f) \subseteq \mathcal{V}(S_n. x_1 x_2 \cdots x_k) \cup \mathcal{V}(x_i^e - x_j^e: i,j = 1, \ldots, n)
	\end{equation*}
	for some $1 \leq e \leq \deg(f)$ if $\mathcal{A}$ is inhomogeneous.
\end{conj}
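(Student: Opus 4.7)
The set-theoretic containment $\mathcal{V}(S_n. x_1 x_2 \cdots x_k) \subseteq \mathcal{V}(S_n.f)$ holds without any genericity assumption on $f$: any $p \in K^n$ with at most $k-1$ nonzero coordinates satisfies $(\sigma.f)(p) = 0$ for every $\sigma \in S_n$ since every monomial in $\sigma.\mathcal{A}$ has support of size $\geq k$, and dually $\sqrt{(S_n.f)} \subseteq (S_n. x_1 \cdots x_k)$. The content of the conjecture is the reverse inclusion in the homogeneous case, and its relaxation by the diagonal locus $\mathcal{V}(x_i^e - x_j^e : i, j)$ in the inhomogeneous case; in either case one must show that, for generic~$f$, no common zero with $\geq k$ nonzero entries lies outside the relevant exceptional locus.

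For the homogeneous case my plan is to mimic the incidence-variety argument that presumably underlies Theorem~\ref{thm:main}(iii), but without the symmetry assumption on~$\mathcal{A}$. Writing $f_c = \sum_{a \in \mathcal{A}} c_a x^a$, consider
\begin{equation*}
    Z = \{(p, c) \in K^n \times K^\mathcal{A} : \#\{i : p_i \neq 0\} \geq k \text{ and } f_c(\sigma.p) = 0 \text{ for all } \sigma \in S_n\}
\end{equation*}
and show that its second projection is not dominant. Stratifying~$K^n$ by $m = \#\{i : p_i \neq 0\}$, the fiber over a fixed~$p$ with $m \geq k$ is the kernel of the linear evaluation map $\varphi_p \colon K^\mathcal{A} \to K^{S_n.p}$, $c \mapsto (f_c(\sigma.p))_\sigma$, whose matrix has rows indexed by the orbit~$S_n.p$ and columns by~$\mathcal{A}$ and is of Vandermonde type. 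Producing an explicit nonvanishing minor large enough to force $\dim\ker\varphi_p + \dim(\text{stratum for given } m) < \dim K^\mathcal{A}$ is the main technical task; unlike in the fully symmetric case of Theorem~\ref{thm:main}(iii), this has to be carried out stratum by stratum.

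For the inhomogeneous case I would first try to reduce degree by degree to the homogeneous case: decompose $f = f_{d_1} + \cdots + f_{d_r}$, and if every~$f_{d_j}$ vanishes on~$S_n.p$ separately, the homogeneous part of the conjecture applied piecewise already forces~$p$ to have fewer than~$k$ nonzero coordinates. Otherwise there is genuine cancellation across degrees, and this is where the diagonal locus must enter. The natural attempt is to form character sums $\sum_\sigma \chi(\sigma)\, f_c(\sigma.p) = 0$ for various irreducible characters~$\chi$ of~$S_n$, decoupling the contributions of different degrees so that relations among the nonzero entries of~$p$ can be read off. The \textbf{main obstacle} is pinning down the precise exponent~$e$ and showing that genericity of the coefficients rules out every multiplicative relation among the nonzero~$p_i$ other than $p_i^e = p_j^e$. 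Ruling out exotic cancellations arising from the simultaneous interplay of several degrees is, I expect, where most of the work will lie, and may require a case analysis on~$r$ and on the partition types appearing in~$\mathcal{A}$.
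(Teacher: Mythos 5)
This statement is explicitly labeled a \emph{Conjecture} in the paper; there is no proof of it to compare against. What the paper does prove is Theorem~\ref{thm:main}\eqref{item:symmetric_supp}, which is the special case where $\mathcal{A}$ is additionally assumed to be symmetric (closed under $S_n$), $\chr(K) = 0$, and $n \geq 5$. The general conjecture, both the homogeneous and inhomogeneous versions, is left open. So your submission cannot be judged against the paper's proof, only on its own merits as an attempted proof of an open statement.

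Taken on its own, your submission is a proof \emph{plan}, not a proof. The one step you actually carry out is correct and worth stating: the inclusion $\mathcal{V}(S_n.x_1\cdots x_k) \subseteq \mathcal{V}(S_n.f)$ holds unconditionally because every monomial supported in $\mathcal{A}$ has at least $k$ variables, and hence $\sqrt{(S_n.f)} \subseteq (S_n.x_1\cdots x_k)$ (over $\overline{K}$ by the Nullstellensatz together with the fact that $(S_n.x_1\cdots x_k)$ is a square-free monomial ideal, hence radical). This correctly isolates the content of the conjecture as the reverse inclusion. But everything beyond that is a sketch with the hard parts flagged rather than done. In the homogeneous case, your incidence-variety setup mirrors the proof of Theorem~\ref{thm:main}\eqref{item:symmetric_supp}, but in that proof the key step --- bounding the rank of the evaluation matrix from below --- leans heavily on the $S_n$-representation structure of the permutation modules $M^{m_0}$ (multiplicity one of the trivial and sign representations, and the fact that every other irreducible has dimension $\geq n-1$ when $n \geq 5$). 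That structure is only available because $\mathcal{A}$ is assumed $S_n$-stable. For general homogeneous $\mathcal{A}$ one no longer has a representation of $S_n$ on $K^\mathcal{A}$, and you offer no substitute for the rank estimate; the claim that the evaluation matrix is ``of Vandermonde type'' and has a large nonvanishing minor on every stratum is precisely what would need to be proved. In the inhomogeneous case, the proposal to decouple degrees via character sums and then ``read off'' multiplicative relations is entirely speculative; in particular, you do not identify the exponent $e$, nor explain why only power relations $p_i^e = p_j^e$ (and not other multiplicative relations) survive genericity. These acknowledged gaps are exactly where the difficulty of the conjecture lies, so the submission does not constitute progress beyond identifying a plausible framework.

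Two smaller points of care. First, your reduction from $\sqrt{(S_n.f)} \subseteq (S_n.x_1\cdots x_k)$ to the set-theoretic statement implicitly uses that $K$ is algebraically closed; the paper's Theorem~\ref{thm:main}\eqref{item:homogeneous} proof shows how to descend from $\overline{K}$ to an infinite $K$, and any complete argument would need the analogous step. Second, your intended reduction in the inhomogeneous case (``if every homogeneous piece vanishes on $S_n.p$ separately, apply the homogeneous conjecture piecewise'') does not obviously go through, because genericity of the full coefficient vector $(c_a)_{a\in\mathcal{A}}$ does not imply genericity of each homogeneous slice $(c_a)_{a\in\mathcal{A}_d}$ \emph{simultaneously for the same point $p$}; you would need a joint dimension count, not a piecewise application.
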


Two immediate consequences of Theorem~\ref{thm:main}\eqref{item:homogeneous} in geometric terms are the following.

\begin{cor}\label{cor:main}
	Let $Z \subseteq \mathbb{A}^n$ be a symmetric homogeneous subscheme, i.e., $Z$ is defined by the $S_n$-orbits of homogeneous polynomials $f_1, \ldots, f_r \in K[x_1, \ldots, x_n]$. If $f_1$ contains a power of a variable, then for sufficiently general coefficients of $f_1$ with respect to its support, set-theoretically $Z = \{0\} \subseteq \mathbb{A}^n$, and the corresponding subscheme of projective space is empty. Similarly for a symmetric homogeneous subscheme of the infinite affine space.
\end{cor}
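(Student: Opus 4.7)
The plan is to apply Theorem~\ref{thm:main}\eqref{item:homogeneous} to $f_1$ with $G = S_n$ and deduce both halves of the statement from the fact that the radical of $(S_n.f_1)$ equals the irrelevant ideal.

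\emph{Finite case.} The symmetric group $S_n$ acts transitively on the variables, $f_1$ is homogeneous, and its support contains a power of a variable by hypothesis, so Theorem~\ref{thm:main}\eqref{item:homogeneous} applies: for general coefficients of $f_1$, $\sqrt{(S_n.f_1)} = (x_1, \ldots, x_n)$. The defining ideal $I$ of $Z$ contains $(S_n.f_1)$, hence $\sqrt{I} \supseteq (x_1, \ldots, x_n)$, forcing equality. Thus $\mathcal{V}(I) \subseteq \{0\}$, and since $I$ is homogeneous $0 \in \mathcal{V}(I)$, so $Z = \{0\}$ set-theoretically. The corresponding projective scheme has defining ideal whose radical is the irrelevant ideal, so it is empty.

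\emph{Infinite case.} Write $f_1 \in K[x_1, \ldots, x_N]$. For any finite subset $J \subseteq \N$ with $|J| \geq N$, a routine check shows that the elements of $S_\infty.f_1$ involving only variables indexed by $J$ form, after relabeling, a copy of $S_{|J|}.f_1$ in $K[x_i : i \in J]$. For general coefficients of $f_1$, Theorem~\ref{thm:main}\eqref{item:homogeneous} transported along the relabeling shows that this family cuts out only the origin in $K^J$. Given any common zero $a = (a_1, a_2, \ldots) \in K^\infty$ of $S_\infty.f_1$ and any index $j$, choosing $J$ to contain $j$ forces $a_j = 0$. Hence $Z = \{0\}$, and the projective statement follows as in the finite case.

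The main subtlety I foresee is that the ``general coefficients'' clause of Theorem~\ref{thm:main}\eqref{item:homogeneous} is stated for a single fixed $n$, whereas the infinite argument seemingly requires it for all $n \geq N$ simultaneously. This is resolved by noting that the conclusion for one $n \geq N$ automatically propagates: if $\mathcal{V}(S_n.f_1) = \{0\}$ in $K^n$, then for every $n' \geq n$ any zero $a \in \mathcal{V}(S_{n'}.f_1) \subseteq K^{n'}$ restricts, on each $n$-element subset $I \subseteq \{1, \ldots, n'\}$, to a zero of a relabeled copy of $(S_n.f_1)$, forcing $(a_i)_{i \in I} = 0$; varying $I$ gives $a = 0$. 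So a single non-empty Zariski-open condition on the coefficients of $f_1$ works for all $n \geq N$, and the infinite case follows.
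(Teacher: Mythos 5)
Your proof is correct, and the finite case is precisely the application of Theorem~\ref{thm:main}(\ref{item:homogeneous}) with $G = S_n$ that the paper has in mind: the paper gives no separate proof and presents this corollary as an immediate consequence. Your treatment of the infinite case is a worthwhile and correct addition. You rightly flag that the ``general coefficients'' open condition in Theorem~\ref{thm:main}(\ref{item:homogeneous}) is a priori tied to a fixed number of variables, and your propagation argument resolves this cleanly: if $\mathcal{V}(S_N.f_1) = \{0\}$ in $\overline{K}^N$, then for any $n' \geq N$ and any common zero $a$ of $S_{n'}.f_1$, restricting $a$ to an arbitrary $N$-element subset of indices gives a zero of a relabeled copy of $S_N.f_1$, hence the zero vector; varying the subset forces $a = 0$. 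Since the conclusion $\sqrt{(S_n.g)} = (x_1,\ldots,x_n)$ is manifestly stable under relabeling the variables, this shows a single non-empty Zariski-open condition on $K^{\mathcal{A}}$ works uniformly for all $n' \geq N$ and hence for $S_\infty$. This makes rigorous the passage to the infinite affine space that the paper leaves implicit.
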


\begin{cor}\label{cor:main2}
	Let $K$ be infinite and $G \subseteq S_{n+1}$ a subgroup acting transitively on the variables $x_0, \ldots, x_n$. Let $X \subseteq \mathbb{P}^n_K$ be a general degree $d$ hypersurface, viewed as a $K$-rational point in $\mathbb{P}(K[x_0, \ldots, x_n]_d)$. Then
	\begin{equation*}
		\bigcap_{\sigma \in G} \sigma(X)  = \emptyset.
	\end{equation*}
\end{cor}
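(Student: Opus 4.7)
The plan is to view this as a direct translation of Theorem~\ref{thm:main}\eqref{item:homogeneous} from affine to projective language. Write $X = \mathcal{V}(f)$ for some $f \in K[x_0, \ldots, x_n]_d$, and take the support set $\mathcal{A}$ to be the set of \emph{all} degree $d$ monomials in $x_0, \ldots, x_n$, so that $K^\mathcal{A} = K[x_0, \ldots, x_n]_d$. Being a general hypersurface means being represented by a general point of $\mathbb{P}(K^\mathcal{A})$. Since $\sigma$ acts on $\mathbb{P}^n$ by permuting homogeneous coordinates, $\sigma(X) = \mathcal{V}(\sigma.f)$, and therefore the intersection $\bigcap_{\sigma \in G} \sigma(X)$ is exactly the projective vanishing locus of the orbit $G.f$, i.e., the projectivization of $\mathcal{V}(G.f) \subseteq \mathbb{A}^{n+1}$.

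Now $\mathcal{A}$ is homogeneous and contains the power $x_0^d$, and $G$ acts transitively on the variables by hypothesis. Theorem~\ref{thm:main}\eqref{item:homogeneous} then guarantees a non-empty Zariski-open subset $U \subseteq K^\mathcal{A}$ such that $\sqrt{(G.f)} = (x_0, \ldots, x_n)$ for every $f \in U$; in particular $\mathcal{V}(G.f) = \{0\} \subseteq \mathbb{A}^{n+1}$. Passing to $\mathbb{P}^n$ strips out the origin, so $\bigcap_{\sigma \in G} \sigma(X) = \emptyset$ whenever the defining polynomial lies in $U$.

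It remains to observe that this conclusion is well-defined on $\mathbb{P}(K^\mathcal{A})$, i.e., that the Zariski-open subset $U$ descends to a non-empty Zariski-open subset of projective space. This is immediate because $(G.(\lambda f)) = (G.f)$ as ideals for any $\lambda \in K^\times$, so $U$ is stable under scaling and hence (up to removing the origin, which lies in the complement anyway) is the preimage of a non-empty Zariski-open subset of $\mathbb{P}(K^\mathcal{A})$ under the natural projection. There is no substantive obstacle to overcome here; the only point requiring a moment of care is this last bookkeeping step reconciling the affine notion of "general coefficients" used in Theorem~\ref{thm:main} with the projective notion of a "general hypersurface" in the statement of the corollary.
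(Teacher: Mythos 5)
Your proof is correct and follows exactly the route the paper intends: the paper presents this corollary as an ``immediate consequence'' of Theorem~\ref{thm:main}\eqref{item:homogeneous} with no separate argument, and your proposal is precisely the spelling-out of that deduction (take $\mathcal{A}$ to be all degree-$d$ monomials, note it contains $x_0^d$, apply the theorem, and pass from the affine cone to projective space). The care you take with the affine-versus-projective notion of ``general'' via scale-invariance of the open set is the right bookkeeping and introduces no new idea beyond what the paper implicitly assumes.
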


One possible choice for $G$ is $\Z/(n+1)\Z$, acting by cyclically permuting the variables in any given order. In this case, Corollary~\ref{cor:main2} states that the $n+1$ cyclic permutations of a general homogeneous polynomial $f$ forms a homogeneous system of parameters for $K[x_0, \ldots, x_n]$.

\begin{rmk}
	In view of Theorem~\ref{thm:main}, one could be lead to think that the ideal generated by the orbits of homogeneous generators with sufficiently general coefficients should itself be monomial. This, however, is not true. A simple counterexample is given in Example~\ref{ex:counter}. Moreover, Theorem~\ref{thm:main} suggests that symmetric ideals should rarely be expected to be radical, see however Theorem~\ref{thm:squarefree_terms} for a notable exception.
\end{rmk}

\begin{rmk}\label{rmk:inhomogeneous}
	If $f$ is inhomogeneous, then $(S_n.f)$ usually does not contain any monomial. Indeed, if $f$ has at least two homogeneous parts $f_i$ and $f_j$ which do not vanish at $(1,1,\ldots,1)$, then $f(t,t,\ldots,t)$ is an inhomogeneous univariate polynomial which therefore has a non-zero solution in $\overline{K}$, in particular $\mathcal{V}_{\overline{K}}(S_n.f)$ has a torus solution.
\end{rmk}

\begin{thm}\label{thm:squarefree_terms}
	Assume $\chr(K) = 0$ or $\chr(K) > n$. Let $f \in K[x_1, \ldots, x_n]$ be a homogeneous polynomial of degree $d$ having only square-free terms. If $f(1, 1, \ldots, 1) \neq 0$ in $K$, then for all $N \geq n+d$ in the polynomial ring $K[x_1, \ldots, x_N]$ we have
	\begin{equation*}
		(S_N.f) = (S_N.x_1 x_2 \cdots x_d).
	\end{equation*}
	Conversely, if $f(1, 1, \ldots, 1) = 0$ in $K$, then $\sqrt{(S_N.f)}$ does not contain any monomial for any $N \geq n$.
\end{thm}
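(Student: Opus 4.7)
The plan is to treat the two directions of the theorem separately. For the converse, observe that if $f(1,\ldots,1) = 0$ then every $\sigma.f$ also vanishes at the all-ones point, so $(1,\ldots,1) \in \mathcal{V}(\sqrt{(S_N.f)})$; since any nonzero monomial evaluates to $1$ at $(1,\ldots,1)$, no such monomial can lie in $\sqrt{(S_N.f)}$.

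For the forward direction, the inclusion $(S_N.f) \subseteq (S_N.x_1\cdots x_d)$ is immediate: every term of every $\sigma.f$ is a squarefree degree-$d$ monomial, hence a scalar multiple of an $S_N$-translate of $x_1\cdots x_d$. By $S_N$-invariance of both ideals, the reverse inclusion reduces to showing $x_1\cdots x_d \in (S_N.f)$; since $f$ and $x_1\cdots x_d$ are both homogeneous of the same degree $d$, this in turn reduces to showing that $x_1\cdots x_d$ lies in the $K$-linear span of $\{\sigma.f : \sigma \in S_N\}$. I would begin by symmetrizing $f$ over the subgroup $S_n \subseteq S_N$ permuting $x_1,\ldots,x_n$: writing $f = \sum_{|S|=d,\,S \subseteq [n]} c_S x_S$, a direct preimage count gives
\[
	\sum_{\sigma \in S_n} \sigma.f \;=\; f(1,\ldots,1) \cdot d!\,(n-d)! \cdot e_d^{[n]},
\]
where $e_d^{[n]}$ denotes the sum of all squarefree degree-$d$ monomials in $x_1,\ldots,x_n$. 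Under $\chr(K) = 0$ or $\chr(K) > n$, the scalar $d!(n-d)!$ is invertible in $K$, so $e_d^{[n]}$, and hence via the $S_N$-action $e_d^M$ for every $n$-subset $M \subseteq [N]$, lies in the $K$-linear span of $S_N.f$.

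The main step is then to verify that these $e_d^M$ already $K$-linearly span the entire space of squarefree degree-$d$ polynomials in $x_1,\ldots,x_N$ whenever $N \geq n+d$. Equivalently, the $S_N$-equivariant map $x_M \mapsto e_d^M$ between the permutation modules on $n$-subsets and $d$-subsets of $[N]$ must be surjective, or dually the $\binom{N}{d} \times \binom{N}{n}$ matrix $W$ with $W_{T,M} = [T \subseteq M]$ (rows indexed by $d$-subsets) must have full row rank $\binom{N}{d}$. In characteristic zero this is Gottlieb's classical theorem; in positive characteristic $p$, Wilson's rank formula expresses the rank of $W$ as
\[
	\sum_{i=0}^{d} \left[\binom{n-i}{d-i} \not\equiv 0 \pmod{p}\right] \left(\binom{N}{i} - \binom{N}{i-1}\right),
\]
and for $p > n$ each $\binom{n-i}{d-i}$ involves only integers less than~$p$, hence is nonzero in $K$, so the sum telescopes to $\binom{N}{d}$. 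Thus $W$ has full row rank, the map is surjective, and $x_1\cdots x_d$ lies in its image.

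The main obstacle is supplying (or citing) this rank computation, which is the step where both the characteristic hypothesis and the bound $N \geq n+d$ genuinely enter. For a self-contained alternative, one may first reduce to the boundary case $N = n+d$, using that the extension of $(S_{n+d}.f)$ to $K[x_1,\ldots,x_N]$ is contained in $(S_N.f)$. There $W$ becomes square of size $\binom{n+d}{d}$, and one can verify invertibility by diagonalizing $W W^\top$ in the Bose--Mesner algebra of the Johnson scheme $J(n+d,d)$: on each Specht isotypic $S^{(n+d-k,k)}$ for $0 \leq k \leq d$, the eigenvalue factors into products of binomials $\binom{n-k}{d-k}$, all of which are invertible in $K$ under $\chr(K) > n$.
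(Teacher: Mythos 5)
Your proof is correct, and it agrees with the paper's on the preliminary reductions (the converse via evaluation at $(1,\ldots,1)$, the easy inclusion $(S_N.f)\subseteq(S_N.x_1\cdots x_d)$, and the $S_n$-symmetrization identity $\sum_{\sigma\in S_n}\sigma.f = f(1,\ldots,1)\,d!(n-d)!\,e_n^d$, which reduces everything to showing $(S_N.e_n^d)=(S_N.x_1\cdots x_d)$). You then diverge from the paper at the crucial step. The paper (Proposition~\ref{prop:elementary_symmetric}\eqref{elimination} together with Lemma~\ref{lemma:binomialIdentity}) exhibits an explicit $K$-linear combination of $S_{n+d}$-translates of $e_n^d$ equal to $\binom{n}{d}x_1\cdots x_d$, with the coefficients $c_j=\binom{n-1}{d}/\binom{n-1}{d-j}$ determined by an elementary binomial-telescoping identity; this is completely self-contained. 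You instead recast the problem as surjectivity of the $S_N$-equivariant map between permutation modules on $n$-subsets and $d$-subsets, i.e.\ as a full-row-rank condition on the inclusion matrix $W_{T,M}=[T\subseteq M]$, and then import Gottlieb's theorem (characteristic $0$) and Wilson's $p$-rank formula (characteristic $p>n$), with the Johnson-scheme eigenvalue computation $(-1)^k\binom{n-k}{d-k}$ as a self-contained fallback at $N=n+d$. Both arguments give the same characteristic hypothesis, since the obstruction in each case comes from the integers $\binom{n-i}{d-i}$, $0\le i\le d$, which have all prime factors below $p$ when $p>n$. What your route buys is conceptual clarity and a clean connection to the well-developed theory of inclusion matrices, matching the representation-theoretic framing the author alludes to after the theorem statement; what the paper's route buys is elementarity and explicit elimination coefficients without any citations. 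One small caveat: your phrase \emph{``invertible in $K$''} for the Johnson-scheme eigenvalues should really be an argument that the \emph{integer} determinant of $W$ (a product of these eigenvalues with multiplicities) is coprime to $p$, since the eigenspace decomposition of the Bose--Mesner algebra is a priori only valid over a splitting field; but this is exactly what the Wilson diagonal form delivers, so the conclusion stands.
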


Theorem~\ref{thm:squarefree_terms} can also be viewed as a representation-theoretic statement about the $S_N$-representation with a basis given by all $d$-element subsets of $\{1, \ldots, N\}$. The character of this representation is known explicitly\footnote{see for example \href{https://mathoverflow.net/questions/123721/permutation-character-of-the-symmetric-group-on-subsets-of-certain-size}{this} mathoverflow post}, nonetheless the result does not seem to follow in a straightforward way. The proof given below is purely combinatorial.

\begin{rmk}
	If $\chr(K) = 0$, Theorem~\ref{thm:squarefree_terms} implies that the $K$-linear $S_\infty$-representation $V_d$ given by all $d$-element subsets of the natural numbers $\mathbb{N}$ has a unique maximal proper subrepresentation, namely the subvector space of $V_d$ defined by all coefficients summing to zero. Indeed, the theorem implies that any element which does not lie in this subvector space generates all of $V_d$. We expect this to be known to experts on the representation theory of the infinite symmetric group.
\end{rmk}

\begin{rmk}
	Theorem~\ref{thm:squarefree_terms} is a first hint that the statements of Theorem~\ref{thm:main} and Conjecture~\ref{conjecture:supp_length_k} might become much nicer, with the possibility of obtaining the genericity conditions explicitly, in \emph{large enough} polynomial rings, i.e., if the ideals are considered in polynomial rings with sufficiently many variables, or even in the infinite polynomial ring. This feature also appears in \cite{Moustrou21Symmetric}.
\end{rmk}

\section{Proofs}

\begin{proof}[Proof of Theorem~\ref{thm:main}(\ref{item:homogeneous})]
	We first work over the algebraic closure $\overline{K}$. Let $V^\ast$ be the $\overline{K}$-vector space of all polynomials with support lying inside $\mathcal{A}$. By assumption, there exists $i$ with $x_i^d \in \mathcal{A}$. Now, the image under the first projection $\mathbb{P}(V^\ast) \times \mathbb{P}^{n-1} \overset{\mathrm{pr}_1}{\longrightarrow} \mathbb{P}(V^\ast)$ of the closed subset
	\begin{equation*}
		X \coloneqq \{([f],[x]): f(\sigma.x) = 0 \text{ for all } \sigma \in G\} \subseteq \mathbb{P}(V^\ast) \times \mathbb{P}^{n-1}
	\end{equation*}
	is closed in $\mathbb{P}(V^\ast)$ because projective space $\mathbb{P}^{n-1}$ is complete. By construction, the complement of $\mathrm{pr}_1(X)$ in $\mathbb{P}(V^\ast)$ is precisely the set of homogeneous polynomials $f$, up to scaling, with support contained in $\mathcal{A}$ such that $\sqrt{(G.f)} = (x_1, \ldots, x_n)$. This set is Zariski-open, so it is enough to see that it is non-empty. But clearly, $[x_i^d]$ is contained.
	
	Secondly, we deduce the claim for arbitrary infinite fields $K$. Let $N \coloneqq |\supp(f)|$. By the above, there is a non-empty principal open subset $D_{\overline{K}}(\alpha) \subseteq \mathbb{A}^N_{\overline{K}}$ for which the assertion of the theorem holds. But $\alpha \in \overline{K}[c_1, \ldots c_N]$ only has finitely many coefficients, so there actually is a finite field extension $L$ of $K$ such that $\alpha \in L[c_1, \ldots c_N]$. The integral ring extension $L[c_1, \ldots, c_N] \hookrightarrow \overline{K}[c_1, \ldots, c_N]$ induces the surjective morphism of affine schemes $b: \mathbb{A}^N_{\overline{K}} \rightarrow \mathbb{A}^N_L$, and as $b^{-1}(D_L(\alpha)) = D_{\overline{K}}(\alpha)$, we obtain $b(D_{\overline{K}}(\alpha)) = D_L(\alpha)$. The finite ring extension $K[c_1, \ldots, c_N] \hookrightarrow L[c_1, \ldots c_N]$ induces the finite surjective morphism $b': \mathbb{A}^N_L \rightarrow \mathbb{A}^N_K$ of finite type $K$-schemes, to which Chevalley's theorem on constructible subsets applies. The dimensions of $\mathbb{A}^N_L$ and $\mathbb{A}^N_K$ agree, and by finiteness of $b'$ the constructible image of the open dense subset $D_L(\alpha)$ under $b'$ is necessarily dense in $\mathbb{A}^N_K$, hence contains a non-empty principal open $D_K(\beta) \subseteq \mathbb{A}^N_K$ (where now $\beta \in K[c_1, \ldots, c_N]$). Restricting to the set of $K$-rational points, $D_K(\beta) \cap K^N$ is still a non-empty open of the irreducible space $K^N$ (with the subspace topology from $\mathbb{A}^N_K$). Both follows from $K$ being infinite as in this case there is no non-zero polynomial in $K[c_1, \ldots, c_N]$ vanishing on all of $K^N$.
	
	Finally, let $f \in D_K(\beta) \cap K^N$. Then $f$ is identified with its corresponding polynomial $f \in K[x_1, \ldots, x_n]$ having $\supp(f) \subseteq \mathcal{A}$. As $f \in D_K(\beta)$, we have $\sqrt{(G.f)} = (x_1, \ldots, x_n)$ in the polynomial ring over $\overline{K}$. So, interpreting all ideals now in the polynomial ring over~$K$, we get $(x_1, \ldots, x_n)/\sqrt{(G.f)} \otimes_{K[x_1, \ldots, x_n]} \overline{K}[x_1, \ldots, x_n] = 0$. But since $\overline{K}[x_1, \ldots, x_n]$ is a free module over $K[x_1, \ldots, x_n]$, the equality $\sqrt{(G.f)} = (x_1, \ldots, x_n)$ also follows over~$K$.
\end{proof}

\begin{proof}[Proof of Theorem~\ref{thm:main}(\ref{item:one_type_monomial})]
	Denote by $\overline{\sigma}$ the coset of the permutation $\sigma \in G \subseteq S_n$ in the set $Q \coloneqq G/H$ where $H$ is the stabilizer of the fixed monomial $x_1^{d_1} \cdots x_n^{d_n}$ in $\mathcal{A}$. Write $f = \sum_{\overline{\sigma} \in Q} c_{\overline{\sigma}} x_{\sigma(1)}^{d_1} \cdots x_{\sigma(n)}^{d_n}$ such that $c_{\overline{\mathrm{id}}} \neq 0$. We again identify $f$ with its coefficient vector $c_f = (c_{\overline{\sigma}})_{\overline{\sigma} \in Q}$ in the vector subspace $V^\ast$ of $\mathrm{Sym}^d((K^n)^\ast)$ generated by all monomials of the same type as $x_1^{d_1} \cdots x_n^{d_n}$. In this way, $V^\ast$ is a linear representation of $G \subseteq S_n$ via the restriction of the diagonal $\mathrm{GL}_n$-action on $\mathrm{Sym}^d((K^n)^\ast)$. The monomial $x_1^{d_1} \cdots x_n^{d_n}$ lies in the ideal $(G.f)$ if and only if its coefficient vector $e_1$ lies in the span of all $c_{\tau.f} \in V^\ast$ with $\tau \in G$. Putting these vectors as columns of a $\dim(V^\ast) \times |G|$-matrix $C = (c_{\tau.f})_{\tau \in G}$, this is equivalent to $e_1$ lying in the image of $C$. As the $G$-orbit of $e_1$ generates all of $V^\ast$, this in turn is equivalent to $C$ having full rank $\dim(V^\ast)$. This clearly defines a Zariski-open subset of the affine space $K^\mathcal{A}$ of coefficient vectors of polynomials with support contained in $\mathcal{A}$. This open set is non-empty as $x_1^{d_1} \cdots x_n^{d_n}$ itself is clearly contained.
\end{proof}

\begin{proof}[Proof of Theorem~\ref{thm:main}(\ref{item:symmetric_supp})]
	Let $\chr(K) = 0$ and $n \geq 5$. There are precisely two $S_n$-representations of dimension $1$, the trivial representation and the sign representation. Every irreducible $S_n$-representation of dimension $>1$ has dimension at least $n-1$ for $n \geq 5$. This follows from the classical representation theory of the symmetric groups, see for example \cite{Fulton1991Representation}. If $m_0$ is a monomial, the \emph{permutation module} $M^{m_0}$ is the $K$-vector space generated by all permutations of $m_0$, with the obvious structure of an $S_n$-representation. The multiplicities of the irreducible $S_n$-representations, i.e. the Specht modules, inside $M^{m_0}$ are classically known as the \emph{Kostka numbers}. We will use that the trivial representation always has multiplicity exactly $1$ in $M^{m_0}$ and is spanned by the monomial symmetric polynomial inside $M^{m_0}$ and the sign representation has multiplicity $0$ if not all exponents of $x_1, \ldots, x_n$ in $m_0$ are distinct. If they are all distinct, then the sign representation has multiplicity $1$ in $M^{m_0}$ as well and is spanned by the polynomial which has coefficient $1$ in front of all $A_n$-permutations of $m_0$ and coefficient $-1$ in front of all the others. In particular, the sum of the isotypic components of the trivial and the sign representation in $M^{m_0}$ has the property that for each of its elements all coefficients of the $A_n$-permutations of $m_0$ agree.
		
	We can assume that $k \leq n-1$, otherwise we can divide $f$ by the appropriate power of $x_1 x_2 \cdots x_n$ and proceed with the resulting polynomial. Using Theorem~\ref{thm:main}\eqref{item:homogeneous} and~\eqref{item:one_type_monomial} we can assume that $\mathcal{A}$ contains at least two monomials of different types and no power of a variable. With the same argument as in the proof of Theorem~\ref{thm:main}\eqref{item:homogeneous}, we can reduce to the case $K = \overline{K}$. Let $V$ be the vector space with $\mathcal{A}$ as a basis. We write the elements of $V$ as vectors $(y_m)_{m \in \mathcal{A}}$. Its dual $V^\ast \cong K^\mathcal{A}$ is interpreted as the space of polynomial functions with support contained in $\mathcal{A}$. We write elements of $V^\ast$ as coefficient vectors $(c_m)_{m \in \mathcal{A}}$. The action of $S_n$ on $\mathcal{A}$ induces compatible linear actions on $V$ and $V^\ast$. Consider now the rational map $\varphi \colon \mathbb{P}^{n-1} \dashrightarrow \mathbb{P}(V)$, sending $[x_1 : \cdots : x_n]$ to the homogeneous coordinate vector of all monomials in $\mathcal{A}$ evaluated at $(x_1, \ldots, x_n)$. The indeterminacy locus of $\varphi$ is precisely $\mathcal{V}(S_n. x_1 x_2 \cdots x_k) \subseteq \mathbb{P}^{n-1}$. We denote its complement by $U$ and observe that $\varphi$ is $S_n$-equivariant. Consider the constructible subset $X \subseteq \mathbb{P}(V^\ast) \times \mathbb{P}(V)$ given by
	\begin{equation*}
	X = \{([c_m],[y_m]): [y_m] \in \varphi(U), \sum_{m \in \mathcal{A}} c_m y_{\sigma.m} = 0 \text{ for all } \sigma \in S_n\}.
	\end{equation*}
	Denoting by $\mathrm{pr}_1$, $\mathrm{pr}_2$ the projections onto the factors, the constructible set $\mathrm{pr}_1(X) \subseteq \mathbb{P}(V^\ast)$ is precisely the set of polynomials $g$, up to scaling, for which the assertion of the theorem does not hold, i.e., $\sqrt{(S_n.g)} \neq (S_n. x_1 x_2 \cdots x_k)$. Hence, it is enough to see that the dimension of $X$, and thus of $\mathrm{pr}_1(X)$, is at most $\dim(\mathbb{P}(V^\ast)) - 1$. To prove the latter, we will write $X$ as a union of finitely many constructible subsets which all satisfy this dimension bound. Namely, we distinguish three classes of points $[(y_m)_{m \in \mathcal{A}}] \in \varphi(U) \subseteq \mathbb{P}(V)$ according to properties of the $S_n$-subrepresentation of $V$ generated by all permutations of $(y_m)_{m \in \mathcal{A}}$.
	\begin{enumerate}[label=(\arabic*)]
		\item First, assume there is some $m_0 \in \mathcal{A}$ which is not a power of $x_1 x_2 \cdots x_n$ and such that $0 \neq y_{m_0} = y_{\sigma.m_0}$ for all $\sigma \in A_n$. There are in fact only finitely many such points in $\varphi(U)$. In order to see this, let $Z \subseteq \mathbb{P}^{n-1}$ be the preimage under $\varphi$ of the set of all such points $[(y_m)_{m \in \mathcal{A}}] \in \varphi(U) \subseteq \mathbb{P}(V)$ (for fixed $m_0$). Then $Z$ is contained in the intersection of $\mathcal{V}(m_0 - \sigma.m_0: \sigma \in A_n) \subseteq \mathbb{P}^{n-1}$ with the algebraic torus. Write $m_0 = x_1^{e_1} \cdots x_n^{e_n}$. Without loss of generality assume $e_1 \geq e_2 \geq e_3$ and $e_1 > e_3$. Then $m_0 - (1,2,3).m_0$ is a monomial multiple of $m' \coloneqq x_1^{e_1 - e_3} - x_2^{e_1 - e_2} x_3^{e_2 - e_3}$. Hence, $Z$ is contained in the subset of the algebraic torus defined by all $A_n$-permutations of $m'$. Let $a \coloneqq e_1 - e_2$ and $b \coloneqq e_2 - e_3$. From the two equations $x_1^{a+b} = x_2^a x_3^b$ and $x_3^{a+b} = x_1^a x_2^b$ we deduce
		\begin{equation*}
		x_1^{(a+b)^2} = x_2^{a(a+b)} (x_3^{a+b})^b = x_2^{a(a+b)}(x_1^a x_2^b)^b = x_2^{a^2 + b^2 + ab} x_1^{ab},
		\end{equation*}
		and hence $x_1^{a^2 + b^2 + ab} = x_2^{a^2 + b^2 + ab}$. Therefore, $Z$ is contained in the closed subset of the algebraic torus defined by $x_1^e = x_2^e = \cdots = x_n^e$ with $e = (e_1 - e_3)^2 - (e_1 - e_2)(e_2 - e_3)$. Clearly, there are only finitely many solutions to these equations in $\mathbb{P}^{n-1}$, so the image of $Z$ under $\varphi$ is also a finite set of points. The fiber of $X \overset{\mathrm{pr}_2}{\longrightarrow} \mathbb{P}(V)$ over each of these points is contained in some hyperplane inside $\mathbb{P}(V^\ast)$.
		\item Second, assume that there is some $m_0 \in \mathcal{A}$ such that $m_0$ is not a power of $x_1 x_2 \cdots x_n$ and $\sum_{m \in S_n.m_0} y_m \neq 0$ but not all $y_m$ with $m$ in the $A_n$-orbit of $m_0$ agree. Then the fiber of $X \overset{\mathrm{pr}_2}{\longrightarrow} \mathbb{P}(V)$ over such a point is a linear space of codimension at least $n$ in $\mathbb{P}(V^\ast)$. Indeed, this fiber is the projectivization of the kernel of the matrix $Y$ whose rows are all $S_n$-permutations of $(y_m)_{m \in \mathcal{A}}$, and the codimension of this kernel in $V^\ast$ is precisely the rank of $Y$. We now claim that even the submatrix $Y'$ of $Y$ whose rows are all the $S_n$-permutations of $(y_m)_{m \in S_n.m_0}$ has rank at least $n$. This is because the $S_n$-representation given by the span of all $S_n$-permutations of $(y_m)_{m \in S_n.m_0}$ is a subrepresentation of the permutation module $M^{m_0}$ which contains the trivial representation (just sum all the $S_n$-permutations of $(y_m)_{m \in S_n.m_0}$) and also some other irreducible representation which is neither the trivial nor the sign representation. The last statement follows from the assumption as every vector $(y_m)_{m \in S_n.m_0}$ lying in the sum of the isotypic components of the trivial and the sign representation inside $M^{m_0}$ has equal entries at all places corresponding to the $A_n$-permutations of $m_0$, as noted in the beginning of the proof. Then the dimension of the preimage under $X \overset{\mathrm{pr}_2}{\longrightarrow} \mathbb{P}(V)$ of the set of all such points $[(y_m)_{m \in \mathcal{A}}] \in \varphi(U)$ is at most $\dim(\mathbb{P}(V^\ast)) - 1$.
		\item If the two cases above do not apply, then for all $m_0 \in \mathcal{A}$ which is not a power of $x_1 x_2 \cdots x_n$ we have $\sum_{m \in S_n.m_0} y_m = 0$ but not all $y_m$ with $m$ in the $A_n$-orbit of $m_0$ agree. This translates into the codimension $1$ condition $\sum_{m \in S_n.m_0} m(x_1, \ldots, x_n) = 0$ on $\mathbb{P}^{n-1}$. With a similar argument as in the previous case, the fiber of any such point $[(y_m)_{m \in \mathcal{A}}] \in \varphi(U)$ has codimension at least $n-1$, making the dimensions add up to at most $\dim(\mathbb{P}(V^\ast)) - 1$.\qedhere
	\end{enumerate}
\end{proof}

For smaller $n$ or symmetric \emph{inhomogeneous} support sets $\mathcal{A}$, the above proof can be adapted in many cases but the details are tedious, so I did not include them here.

Theorem~\ref{thm:squarefree_terms} is a consequence of the following more special result. In order to state it, denote by $e_n^d(x_1, \dots, x_n)$ the elementary symmetric polynomial of degree $d$ in $n$ variables, $1 \leq d \leq n$.

\begin{prop}\label{prop:elementary_symmetric}
	Let $I = (S_N.e_n^d(x_1, \ldots, x_n)) \subseteq K[x_1, \ldots, x_N]$ and $N \geq n+d$.
	\begin{enumerate}
		\item\label{radical} If $\chr(K) \nmid \binom{n}{d}$, then
		\begin{equation*}
			\sqrt{I} = (S_N.x_1 \cdots x_d).
		\end{equation*}
		Moreover, if $\chr(K) \mid \binom{n}{d}$, then for any $N \geq n$, the ideal $\sqrt{I}$ does not contain any monomial.
		\item\label{elimination} If $\chr(K) = 0$ or $\chr(K) > n$, then
		\begin{equation*}
			I = (S_N.x_1 \cdots x_d).
		\end{equation*}
	\end{enumerate}
\end{prop}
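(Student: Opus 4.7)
The plan is to handle \eqref{elimination} by a combinatorial induction on $d$, and \eqref{radical} via Nullstellensatz over $\overline{K}$. In both cases the inclusion $I \subseteq J \coloneqq (S_N.x_1\cdots x_d)$ is immediate, since every $\sigma.e_n^d$ is a $\Z$-linear combination of squarefree degree-$d$ monomials; the work is in the reverse direction.

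For \eqref{elimination}, I induct on $d$; the case $d=1$ is a direct computation (differences of translates of $e_n^1$ put every $x_a-x_b$ in $I$, and one summation using $\chr(K) \nmid n$ isolates $x_1 \in I$). For the step, the key swap identity
\begin{equation*}
    e_n^d(x_{i_1},\ldots,x_{i_{n-1}},x_a) - e_n^d(x_{i_1},\ldots,x_{i_{n-1}},x_b) = (x_a - x_b)\, e_{n-1}^{d-1}(x_{i_1},\ldots,x_{i_{n-1}})
\end{equation*}
shows $(x_a - x_b)\, e_{n-1}^{d-1}(x_I) \in I$ for every $(n-1)$-subset $I$ and $a,b \notin I$. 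Apply the inductive hypothesis to the restricted problem in the $N-2$ variables $\{x_i : i \neq a,b\}$ (the condition $N-2 \geq (n-1)+(d-1)$ is exactly $N \geq n+d$): the $S_{N-2}$-orbit of $e_{n-1}^{d-1}$ in those variables spans the $K$-vector space of all squarefree degree-$(d-1)$ monomials there, so $(x_a - x_b)\, x_{S_0} \in I$ for every $(d-1)$-subset $S_0 \subseteq [N]\setminus\{a,b\}$. Two adjacent $d$-subsets $S,S'$ therefore give $x_S - x_{S'} \in I$, and by chaining all squarefree degree-$d$ monomials are congruent modulo $I$. Reading the relation $e_n^d(x_1,\ldots,x_n) \in I$ in this quotient gives $\binom{n}{d}\,x_1\cdots x_d \in I$, and under $\chr(K) = 0$ or $\chr(K) > n$ the factor $\binom{n}{d}$ is invertible, so $x_1\cdots x_d \in I$, whence $J \subseteq I$ by $S_N$-symmetry.

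For \eqref{radical}, I pass to $\overline{K}$ and show $V(I) = V(J)$; Nullstellensatz then yields $\sqrt{I_{\overline{K}}} = \sqrt{J_{\overline{K}}} = J_{\overline{K}}$ (using that $J$ is squarefree monomial), and descent to $K$ proceeds as in the last part of the proof of Theorem~\ref{thm:main}\eqref{item:homogeneous}. The nontrivial direction is $V(I) \subseteq V(J)$: given $x \in \overline{K}^N$ with $r \geq d$ nonzero coordinates (WLOG $x_1,\ldots,x_r \in \overline{K}^*$), produce an $n$-subset $T$ with $e_n^d(x_T) \neq 0$. If $r \leq N-n+d$, taking $T = \{1,\ldots,d\} \cup Z$ with $Z$ any $(n-d)$-subset of zero coordinates gives $e_n^d(x_T) = x_1\cdots x_d \neq 0$ outright. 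In the hard range $r > N-n+d$, every $n$-subset is forced to contain more than $d$ nonzero coordinates, and I argue by contradiction from assuming $e_n^d(x_T) = 0$ for every $T$. The same swap identity shows that if $e_{n-1}^{d-1}(x_I) \neq 0$ for some $(n-1)$-subset $I$, all $x_a$ with $a \notin I$ must agree on a common value $c$; iterating (and descending to $e_{n-k}^{d-k}$ on smaller subsets when the previous level vanishes uniformly) forces a common nonzero $c$ to be attained by at least $n$ coordinates, whereupon $e_n^d(x_T) = \binom{n}{d}c^d \neq 0$ for $T$ inside that constant block by $\chr(K) \nmid \binom{n}{d}$, the desired contradiction. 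The converse portion of \eqref{radical} is one line: if $\chr(K) \mid \binom{n}{d}$, then $e_n^d(1,\ldots,1) = 0$, so every element of $I$ (and of $\sqrt{I}$) vanishes at $(1,\ldots,1)$, yet no monomial does, so $\sqrt{I}$ contains no monomial.

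The single step I expect to demand the most care is the iterated equality-propagation in the regime $r > N-n+d$ of \eqref{radical}: one must cleanly handle the subcase in which $e_{n-1}^{d-1}$ vanishes identically on every $(n-1)$-subset (forcing a descent one degree further down) and argue that the resulting equality classes merge into a single constant block of size at least $n$. This is naturally phrased as a nested induction on $d$ together with a connectivity-style argument on the family of $(n-1)$-subsets with non-vanishing $e_{n-1}^{d-1}$.
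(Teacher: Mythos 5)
Your argument for part~\eqref{elimination} is correct and takes a genuinely different route from the paper. The paper directly posits an explicit elimination identity
\begin{equation*}
\binom{n}{d} x_1\cdots x_d = \sum_{j=0}^d (-1)^j c_j \sum_{|J_1|=d-j} e_n^d(x_J),
\end{equation*}
solves for the coefficients $c_j = \binom{n-1}{d}/\binom{n-1}{d-j}$ by an upper-triangular recursion, and verifies the resulting binomial identity (Lemma~\ref{lemma:binomialIdentity}). Your induction on $d$ replaces this computation with the swap identity plus the inductive hypothesis for $(n-1,d-1)$ to show all squarefree degree-$d$ monomials are congruent mod $I$, reading off $\binom{n}{d}\,x_1\cdots x_d \in I$ from $e_n^d \in I$. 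Both routes need the whole chain of divisibilities $\chr(K)\nmid\binom{n-j}{d-j}$, $j=0,\ldots,d-1$ (yours through the inductive hypotheses, the paper's through the denominators of the $c_j$), consistent with the paper's example showing that $\chr(K)\nmid\binom{n}{d}$ alone is insufficient. Your approach trades the explicit formula for a cleaner inductive structure; the paper's produces the coefficients explicitly, which is arguably more informative.

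For part~\eqref{radical}, there is a genuine gap in the ``iterated equality-propagation'' step, exactly where you flag the argument as most delicate. From $e_{n-k}^{d-k}(x_I)\neq 0$ with $|I|=n-k$ you correctly deduce that all $x_a$, $a\notin I$, share a common value $c$; but there are at least $N-n+k$ such indices, which is only bounded below by $d+k$, not by $n$, and the value $c$ may be $0$. Concretely, take $n=4$, $d=2$, $N=6$ and $x=(-c,-c,-c,c,c,c)$: the descent finds $I=\{1,2,3\}$ with $e_3^1(x_I)=-3c\neq 0$, a constant block of size $3<n$ outside $I$, and one then still has to work to reach a contradiction (here $e_4^2(-c,-c,c,c)=-2c^2$). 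So the conclusion ``forces a common nonzero $c$ to be attained by at least $n$ coordinates'' is simply not what the swap identity gives you, and the sketch does not close this case. The paper avoids the issue by assembling the iterated swap differences into the single polynomial $f=(x_1-x_{n+1})\cdots(x_d-x_{n+d})\in I$ -- an ingredient you essentially have but do not exploit -- and then bounding the multiplicity of each nonzero value of $y\in\mathcal V(I)$: first at most $n-1$ (via $\binom{n}{d}t^d\neq 0$), then at most $d-1$ (via vanishing of all permutations of $f$ together with $N\geq n+d$), and finally showing that the total count of nonzero entries is $<d$ by evaluating the permuted $(x_1-x_{d+1})\cdots(x_d-x_{2d})$ on the sorted vector. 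If you want to stay within your framework, deriving $f\in I$ from iterated swaps and then running the paper's multiplicity argument is the natural fix. Your one-line proof of the converse clause of~\eqref{radical} is fine and matches the paper's.
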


\begin{proof}[Proof of Proposition~\ref{prop:elementary_symmetric}(\ref{radical})]
		For the first claim, we start by showing that the polynomial $f \coloneqq (x_1 - x_{n+1}) \cdots (x_d - x_{n+d})$ lies in the ideal $I$ and hence so do all its permutations. Indeed, write
	\begin{equation*}
		f_1 \coloneqq e_n^d(x_1, \dots, x_n) - (1, n+1).e_n^d(x_1, \dots, x_n) = (x_1 - x_{n+1}) e_{n-1}^{d-1}(x_2, \dots, x_n) \in I.
	\end{equation*}
	Similarly, we see that
	\begin{equation*}
		f_2 \coloneqq f_1 - (2,n+2).f_1 = (x_1 - x_{n+1})(x_2 - x_{n+2})e_{n-2}^{d-2}(x_3, \dots, x_n) \in I.
	\end{equation*}
	Inductively, we obtain $f \in I$, as desired. On the level of vanishing sets, the inclusion $\sigma.f \in I$ for all $\sigma \in S_N$ implies that for any $y \in \mathcal{V}(I) \subseteq K^N$ we have $f(\sigma.y) = 0$ for every permutation $\sigma$ of the vector $y$. Now, every non-zero entry of $y$ occurs at most $n-1$ times. Indeed, if this entry $t \in K$ occurs at the indices $i_1 < \cdots < i_n$, then $0 = e_n^d(y_{i_1}, \ldots, y_{i_n}) = \binom{n}{d} t^d$, hence $t = 0$ by our assumption on the characteristic of $K$. But then it even follows that every non-zero entry can occur at most $d-1$ times. If namely the entry $t \neq 0$ occurs $r$ times with $d \leq r \leq n-1$, then there are at least $N-r \geq d+1$ entries of $y$ which are different from $t$ because $N \geq n+d$. This, however, contradicts $f(\sigma.y) = 0$ for all $\sigma$ as we can find a permutation $\sigma$ after which $y_1 = \cdots = y_d = t$ and $y_{n+1}, \ldots, y_{n+d}$ are all different from $t$. Therefore, given any $y \in \mathcal{V}(I)$, there is a permutation of the entries of $y$ after which they are assembled in a way such that
	\begin{align*}
		t_1 &\coloneqq y_1 = \cdots = y_{r_1} \neq 0, \\
		t_2 &\coloneqq y_{r_1 + 1} = \cdots = y_{r_1 + r_2} \neq 0, \\
		&\ \ \vdots \\
		t_s &\coloneqq y_{r_1 + \ldots + r_{s-1} + 1} = \cdots = y_{r_1 + \ldots + r_s} \neq 0
	\end{align*}
	and $y_{r_1 + \ldots + r_s+1} = \cdots = y_N = 0$, where $d > r_1 \geq r_2 \geq \cdots \geq r_s \geq 1$. We now claim that $r_1 + \ldots + r_s < d$, so there are at most $d-1$ non-zero entries for any vector $y \in \mathcal{V}(I)$. Assume to the contrary $r_1 + \ldots + r_s \geq d$. Let $\tau \in S_N$ be the permutation that interchanges $d+i$ with $n+i$ for all $1 \leq i \leq d$ and is the identity otherwise. Then $\tau.f = (x_1 - x_{d+1}) \cdots (x_d - x_{2d})$ and hence
	\begin{equation*}
		(\tau.f)(y_1, \ldots, y_N) = (\tau.f)(\underbrace{t_1, t_1, \ldots, t_j}_{d}, \underbrace{t_{j'}, \ldots, t_{j''}}_{d}, \underbrace{\ast, \ldots, \ast}_{N-2d}) \neq 0,
	\end{equation*}
	where $1 < j \leq j' < j''$. By assumption on the order of the $r_i$, no factor of $\tau.f$ can vanish, hence $\tau.f$ does not vanish at $y$, a contradiction. Therefore,
	\begin{equation*}
		\mathcal{V}(I) = \mathcal{V}(S_N. x_1 \cdots x_d).
	\end{equation*}
	We obtain $\sqrt{I} = (S_N.x_1 \cdots x_d)$. For the second claim we observe $e_n^d(1,1,\ldots,1) = \binom{n}{d} = 0$ in~$K$, so the all ones vector $(1,1,\ldots,1) \in K^N$ lies in $\mathcal{V}(I)$.
\end{proof}

\begin{lemma}\label{lemma:binomialIdentity}
	We have the identity
	\begin{equation*}
	\binom{n-1}{d} \sum_{j = 0}^d (-1)^j \frac{\binom{d-a}{j} \binom{n-d+a}{d-j}}{\binom{n-1}{d-j}} = \begin{cases} \binom{n}{d} & \text{for } a=d \\ 0 & \text{for all } 0 \leq a \leq d-1 \end{cases}
	\end{equation*}
	for all $1 \leq d \leq n-1$.
\end{lemma}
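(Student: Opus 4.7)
The plan is to split off the easy case $a=d$ and reduce the remaining cases $0\le a\le d-1$ to the standard fact that the $m$-th forward difference of a polynomial of degree less than $m$ vanishes.

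For $a=d$, the factor $\binom{d-a}{j}=\binom{0}{j}$ is nonzero only at $j=0$, so the sum collapses to $\binom{n-1}{d}\cdot\binom{n}{d}/\binom{n-1}{d}=\binom{n}{d}$, exactly as required.

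For $0\le a\le d-1$, I would expand every binomial coefficient into factorials. Writing
\[
\frac{\binom{n-1}{d}}{\binom{n-1}{d-j}}=\frac{(d-j)!\,(n-d+j-1)!}{d!\,(n-d-1)!},
\]
the two factors $(d-j)!$ cancel (one coming from this ratio, the other from $\binom{n-d+a}{d-j}$), and applying the elementary identity
\[
\frac{(n-d+j-1)!}{(n-2d+a+j)!}=(d-a-1)!\,\binom{n-d+j-1}{d-a-1}
\]
the $j$-th summand of the left-hand side rearranges into
\[
\frac{(d-a-1)!\,(n-d+a)!}{d!\,(n-d-1)!}\cdot(-1)^j\binom{d-a}{j}\binom{n-d+j-1}{d-a-1},
\]
whose prefactor is independent of $j$. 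The key observation is that the factor $\binom{n-d+j-1}{d-a-1}$, viewed as a function of $j$, is a polynomial of degree exactly $d-a-1$, since it is the product of $d-a-1$ linear factors in $j$ divided by $(d-a-1)!$.

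Now I invoke the standard identity $\sum_{j=0}^{m}(-1)^j\binom{m}{j}p(j)=0$, valid whenever $p$ is a polynomial of degree less than $m$ (this sum being, up to sign, the $m$-th forward difference of $p$ evaluated at $0$). Applied with $m=d-a$ and $p(j)=\binom{n-d+j-1}{d-a-1}$ of degree $d-a-1<d-a$, it forces the sum to vanish, finishing the case $0\le a\le d-1$. The only substantive step is the factorial bookkeeping that produces the clean product above; once that rearrangement is carried out I expect no further obstacle, which is precisely what makes this the appropriate level of generality for applying the finite-difference triviality.
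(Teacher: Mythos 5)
Your proposal is correct and takes essentially the same route as the paper's own argument: both handle $a=d$ by direct inspection and then, for $0 \le a \le d-1$, rearrange the $j$-th summand so that the sum becomes an alternating binomial sum $\sum_{j=0}^{d-a}(-1)^j\binom{d-a}{j}p(j)$ with $p$ a polynomial of degree $d-a-1$, which vanishes as the $(d-a)$-th finite difference of a too-low-degree polynomial. The paper phrases $p$ via the falling factorial $(s+j)^{\underline{r-1}}$ with $s=n-1-d$, $r=d-a$, which is precisely $(r-1)!\binom{n-d+j-1}{d-a-1}$, so the two presentations differ only in notation and in how explicitly the factorial bookkeeping is carried out.
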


\begin{proof}
	For $a = d$ this can be checked immediately, and in the other cases this is well known if written equivalently in the form
	\begin{equation*}
	\sum_{j = 0}^r (-1)^j \binom{r}{j} (s+j)^{\underline{r-1}} = 0,
	\end{equation*}
	where $(s+j)^{\underline{r-1}}$ denotes the falling factorial and $r \coloneqq d-a \geq 1$, $s \coloneqq n-1-d \geq 0$. To see this, observe that this sum is just the $r$-th discrete derivative of the polynomial $s^{\underline{r-1}}$ which is of degree $r-1$ in $s$. Here, the discrete derivative of a polynomial $f(s)$ is defined as $(\Delta f)(s) = f(s+1) - f(s)$. Clearly, $\deg(\Delta f) \leq \deg(f) - 1$. It follows that $\Delta^{(r)}s^{\underline{r-1}} = 0$.
\end{proof}

\begin{proof}[Proof of Proposition~\ref{prop:elementary_symmetric}(\ref{elimination})]
	For a given field $K$, our goal is to find, if possible, a $K$-linear combination of the polynomials $\sigma.e_n^d$ with $\sigma \in S_{n+d}$ which equals a non-zero $K$-multiple of the monomial $x_1 \cdots x_d$. Let first $K = \Q$. We want to find coefficients $c_j \in \Q$ such that
	\begin{equation}\label{eq:eliminationIdentity}
		\binom{n}{d} x_1 \cdots x_d = \sum_{j = 0}^d (-1)^j c_j \sum_{|J_1| = d-j} e_n^d(x_J), 
	\end{equation}
	where the second sum ranges over all subsets $J \subseteq \{1, \ldots, n+d\}$ of cardinality $n$ such that $J_1 \coloneqq J \cap \{1, \ldots, d\}$ is of the given cardinality. Moreover, by $e_n^d(x_J)$ we denote the elementary symmetric polynomial of degree $d$ in the $n$ variables indexed by $J$. It is easy to see that only the summand with $j=0$ contributes elementary symmetric polynomials that contain the monomial $x_1 \cdots x_d$, and there are precisely $\binom{n}{d}$ of those, which forces $c_0 = 1$ for equation \eqref{eq:eliminationIdentity} to hold. Note moreover that in the $j$-th summand of \eqref{eq:eliminationIdentity}, all occuring $e_n^d(x_J)$ only contain monomials containing at most $d-j$ of the variables $x_1, \ldots, x_d$. More precisely, given a square-free monomial $x_A x_B$ with $A \subseteq \{1, \ldots d\}$, $B \subseteq \{d+1, \ldots, d+n\}$ of degree $d$ we write $a = |A|$ and $|B| = d-a$. Then in the $j$-th summand of the sum in \eqref{eq:eliminationIdentity}, the monomial $x_A x_B$ occurs exactly $\binom{d-a}{j} \binom{n-d+a}{d-j}$ times as a counting argument shows. Obviously, this does not depend on the sets $A$ and $B$ but only on the cardinality of $A$. Now define $c_1$ in a way such that the monomials $x_A x_B$ with $a = d-1$ in the $j=1$ summand cancel with the corresponding terms in the $j=0$ summand. Clearly, there exists a unique such $c_1 \in \Q$, and the $j=1$ summand does not contribute monomials with $a=d$. Similarily, define then $c_2 \in \Q$ to be the unique rational number such that the monomials $x_A x_B$ with $a = d-2$ in the $j=2$ summand cancel out all the corresponding terms in the $j=0$ and $j=1$ summands. Again, the $j=2$ summand cannot contribute any monomials with $a \geq d-1$. Continuing in this way, we define unique numbers $c_0, \ldots, c_d \in \Q$ depending only on $n$ and $d$ with $c_0 = 1$ and such that \eqref{eq:eliminationIdentity} must hold by construction. Now fix a monomial $x_A x_B$. Then the fact that this monomial has coefficient $\binom{n}{d}$ if $a = d$ and coefficient $0$ otherwise on the right hand side of the equation \eqref{eq:eliminationIdentity} precisely translates into the binomial identity
	\begin{equation*}
		\sum_{j = 0}^d (-1)^j c_j \binom{d-a}{j} \binom{n-d+a}{d-j} = \begin{cases} \binom{n}{d} & \text{for } a=d \\ 0 & \text{for all } 0 \leq a \leq d-1 \end{cases}
	\end{equation*}
	for all $1 \leq d \leq n-1$. From the uniqueness of the $c_j$ and Lemma~\ref{lemma:binomialIdentity} we then obtain $c_j = \frac{\binom{n-1}{d}}{\binom{n-1}{d-j}}$. Next, as we now know that the binomial identity of Lemma~\ref{lemma:binomialIdentity} holds over every field $K$ in which all expressions are defined, if additionally $\chr(K) \nmid \binom{n}{d}$, then the monomial $x_1 \cdots x_d$ lies in the ideal $I$ by \eqref{eq:eliminationIdentity}. Equivalently, a sufficient assumption on $\chr(K)$ for the elimination in \eqref{eq:eliminationIdentity} to work is that $\chr(K)$ does not divide $\binom{n}{d}$ nor any denominator of the reduced fractions $c_j$ for all $0 \leq j \leq d$. In particular, $\chr(K) = 0$ or $\chr(K) > n$ will suffice.
\end{proof}

\begin{proof}[Proof of Theorem~\ref{thm:squarefree_terms}]
	Let $\mathcal{J}$ be the set of all $d$-element subsets of $\{1, \ldots, n\}$ corresponding to terms of $f$, so that there are $c_J \in K$ for all $J \in \mathcal{J}$ such that $f = \sum_{J \in \mathcal{J}} c_J x_J$. We write $c \coloneqq f(1, 1, \ldots, 1) = \sum_{J \in \mathcal{J}} c_J$. The following polynomial clearly lies in the ideal $(S_N.f)$:
	\begin{equation*}
		\sum_{\sigma \in S_n} \sigma.f = \sum_{J \in \mathcal{J}} c_J \sum_{\sigma \in S_n} \sigma.x_J = \left(\sum_{J \in \mathcal{J}} c_J \right) d!(n-d)! e_n^d(x_1, \ldots, x_n) = c \cdot d! (n-d)! e_n^d(x_1, \ldots, x_n).
	\end{equation*}
	If $c \neq 0$ in $K$, it follows that $e_n^d(x_1, \ldots, x_n) \in (S_N.f)$ and so $(S_N.f) = (S_N.x_1 x_2 \cdots x_d)$ by Proposition~\ref{prop:elementary_symmetric}\eqref{elimination}. Conversely, if $c = 0$ in $K$, then the all ones vector $(1, 1, \ldots, 1) \in K^N$ lies in the vanishing set $\mathcal{V}(S_N.f)$, so $(S_N.f)$ and its radical do not contain any monomial. 
\end{proof}

\begin{ex}\label{ex:counter}
	Let $f \coloneqq x_1^2 + t x_1 x_2$. Then for all $t \neq 0$ the ideal $(S_n.f)$ does not contain $x_1^2$ for any $n \geq 2$. Suppose differently and let $c_\sigma \in K$ for $\sigma \in S_n$ such that $$x_1^2 = \sum_{\sigma \in S_n} c_{\sigma} (\sigma.f) = \sum_{\sigma \in S_n} c_\sigma x_{\sigma(1)}^2 + t \sum_{\sigma \in S_n} c_\sigma x_{\sigma(1)} x_{\sigma(2)}.$$ Then $\sum_{\sigma(1) = 1} c_{\sigma} = 1$ and $\sum_{\sigma(1) \neq 1} c_{\sigma} = 0$ by looking at the sum only involving squared variables. The sum only involving square-free variables becomes $$0 = t x_1 \sum_{\sigma(1) = 1} c_{\sigma} x_{\sigma(2)} + t \sum_{\sigma(1) \neq 1} c_{\sigma} x_{\sigma(1)} x_{\sigma(2)}.$$ Now, we set all variables $x_n = \cdots = x_3 \coloneqq x_2$ (but $x_1$ stays unchanged). Then the first sum becomes simply $t x_1 x_2$, and the second sum, after splitting it up as $$t \sum_{\sigma(1) \neq 1 \neq \sigma(2)} c_{\sigma} x_{\sigma(1)} x_{\sigma(2)} + t \sum_{\sigma(2) = 1} c_{\sigma} x_{\sigma(1)} x_{\sigma(2)},$$ becomes $$t x_2^2 (\sum_{\sigma(1) \neq 1 \neq \sigma(2)} c_{\sigma}) + t x_1 x_2 \sum_{\sigma(2) = 1} c_{\sigma}.$$ From $t \neq 0$ it follows that $\sum_{\sigma(1) \neq 1 \neq \sigma(2)} c_{\sigma} = 0$ and $\sum_{\sigma(2) = 1} c_{\sigma} = -1$. But then, $$-1 = \sum_{\sigma(1) \neq 1 \neq \sigma(2)} c_{\sigma} + \sum_{\sigma(2) = 1} c_{\sigma} = \sum_{\sigma(1) \neq 1} c_{\sigma} = 0,$$ a contradiction.
\end{ex}

\begin{ex}
	The genericity assumptions of Theorem~\ref{thm:main}\eqref{item:homogeneous} and \eqref{item:one_type_monomial} are necessary. Consider $f \coloneqq x_1^2x_2 + x_1x_2^2$ and $I \coloneqq (S_N.f) \subseteq \Q[x_1, \ldots, x_N]$. A computation in \texttt{Macaulay2} for $N = 3$ shows
	\begin{equation*}
		\sqrt{I} = (S_N.f, S_N.x_1x_2x_3),
	\end{equation*}
	so for all $N \geq 3$ we have $x_1 x_2 x_3 \in \sqrt{I}$. However, no permutation of $x_1 x_2$ lies in $\sqrt{I}$ because all permutations of $(1, -1, 0, 0, \ldots, 0)$ lie in $\mathcal{V}(S_N.f)$. Hence, all monomials in $I$ and $\sqrt{I}$ are divisible by at least $3$ distinct variables.
\end{ex}

\begin{ex}
	It is possible even for the ideal generated by the orbit of an inhomogeneous polynomial to be monomial although it is a rare phenomenon as explained by Remark~\ref{rmk:inhomogeneous}. An example is given by $I \coloneqq (S_3.(x_1 + x_2 + x_1^2 - x_2^2)) \in K[x_1, x_2, x_3]$ for any field $K$ of $\chr(K) \neq 2$. Indeed, one has $I = (x_1, x_2, x_3)$ as follows from
	\begin{equation*}
		2x_1 = (x_1 + x_2 + x_1^2 - x_2^2) + (x_3 + x_1 + x_3^2 - x_1^2) - (x_3 + x_2 + x_3^2 - x_2^2) \in I.
	\end{equation*}
\end{ex}

\begin{ex}
	Theorem~\ref{thm:squarefree_terms} and Proposition~\ref{prop:elementary_symmetric} are false in general for $N < n+d$. Consider the case $n=3$, $d=2$, $N = 4 < n+d$ for the ideal $I = (S_N.e_3^2) \subseteq \mathbb{Q}[x_1, x_2, x_3, x_4]$. Then a computation in \texttt{Macaulay2} gives the Gröbner basis
	\begin{equation*}
		\{x_1 x_2 - x_3 x_4, x_1 x_3 - x_2 x_4, x_1 x_4 + x_2 x_4 + x_3 x_4, x_2 x_3 + x_2 x_4 + x_3 x_4, x_2^2 x_4, x_2 x_4^2,x_3^2 x_4, x_3 x_4^2\}
	\end{equation*}
	with respect to the lexicographic monomial order, and no monomial of degree $2$ lies in $I$. In particular, $I$ is not radical.
\end{ex}

\begin{ex}
	The statement of Proposition~\ref{prop:elementary_symmetric}\eqref{elimination} is false in general if only $\chr(K) \nmid \binom{n}{d}$ holds. As an example, consider the case $K = \Z/2\Z$ with $n=3$, $d=2$ and $N = n+d = 5$ for the polynomial $e_3^2 = x_1 x_2 + x_1 x_3 + x_2 x_3$. Then $(x_1x_2)^2 \in I = (S_N.e_3^2)$ but $x_1x_2 \not\in I$, as can be checked by computing a Gröbner basis of $I$. The same is still true for $N=6,7$.
\end{ex}

\subsection*{Acknowledgments}
I want to thank Uwe Nagel and Martina Juhnke-Kubitzke for their lectures at the REACT workshop as well as the organizers of the latter. Proposition~\ref{prop:elementary_symmetric} is the result of a first try towards an open problem posed there. I am grateful to Jan Draisma for helpful comments concerning Theorem~\ref{thm:squarefree_terms}, to Satoshi Murai for pointing out \cite{Moustrou21Symmetric} which I was not aware of before, and to Benjamin Nill for explaining a quick proof of Lemma~\ref{lemma:binomialIdentity}. Moreover, thanks go to Thomas Kahle, Abeer Al Ahmadieh and Arne Lien for pointers to the literature, discussions and encouragement. Finally, \texttt{Macaulay2} has been of great help in gaining intuition by computing many examples. The author is supported by the Deutsche Forschungsgemeinschaft (DFG, German Research Foundation) -- 314838170, GRK~2297 MathCoRe.

\bibliographystyle{amsplain}
\bibliography{Sym-orbits.bib}
\end{document}